\title[Representations of analytic functions as infinite products]{Representations of analytic functions as infinite
products and their application to numerical computations}
\author{Marcin Mazur}
\author{Bogdan V.~Petrenko}
\address{
Department of Mathematics \\
Binghamton University \\
P.O. Box 6000 \\
Binghamton, NY 13892-6000, USA } \email{
mazur@math.binghamton.edu}
\address{
Department of Mathematics \\ SUNY Brockport \\ 350 New Campus
Drive \\ Brockport, NY 14420, USA
 }
\email{ bpetrenk@brockport.edu}
\newtheorem{theorem}{Theorem}[section]
\newtheorem{lemma}[theorem]{Lemma}
\newtheorem{proposition}[theorem]{Proposition}
\newtheorem{corollary}[theorem]{Corollary}
\newtheorem{remark}[theorem]{Remark}
\newtheorem{example}[theorem]{Example}
\def\tr{\text{\rm tr}}
\def\Ln{\text{\rm Log}}
\def\tr{\text{tr}}
\newcommand{\mo}[1]{{ \hbox{\rm{ (mod\ $#1$) }}}}
\newtheorem*{te*}{Theorem}
\begin{document}
\maketitle

\begin{abstract}
Let $D$ be an open disk of radius $\le 1$ in $\mathbb C$, and let
$(\epsilon_n)$
be a sequence of $\pm 1$. We prove that for every analytic function $f: D \to
\mathbb C$ without
zeros in $D$, there exists
a unique sequence $(\alpha_n)$ of complex numbers such that
$f(z) = f(0)\prod_{n=1}^{\infty} (1+\epsilon_nz^n)^{\alpha_n}$ for every $z \in D$.
From this representation
we obtain a numerical method for calculating products of the form
$\prod_{p~\text{prime}} f(1/p)$
provided $f(0)=1$ and $f'(0) = 0$; our method generalizes a well known method of Pieter Moree. We
illustrate this method on a constant of Ramanujan $\pi^{-1/2}\prod_{p~\text{prime}}
\sqrt{p^2-p}\ln\left(p/(p-1)\right)$. From the properties of the exponents $\alpha_n$, we obtain a proof of
the following congruences, which have been the subject of several recent publications motivated by
some questions of Arnold:
for every $n \times n$ integral matrix $A$, every
prime number $p$, and every positive
integer $k$ we have $\text{tr} A^{p^k}  \equiv \text{tr}
A^{p^{k-1}}
\mo{p^k}$.

\vspace{4mm} \noindent {\bf Mathematics Subject Classification
(2010).} Primary 11Y60, 30E10, 30J99, 40A30, 40A20. Secondary 11A07, 11C20.

\vspace{3mm} \noindent {\bf Keywords.} Euler product, infinite products, approximating constants, congruences for traces.
\end{abstract}

\section{Introduction}
Many constants in number theory appear in the form $\prod_p f(1/p)$, where
the product is taken over all (sufficiently large) prime numbers and $f$ is a function analytic
in a neighborhood of $0$ and such that $f(0)=1$ and $f'(0)=0$. The results of this paper arose
from our attempt to compute some constants of this type with high accuracy. From this perspective,
our work should be considered as a generalization of the technique of Pieter Moree \cite{moree}, who
shows how to compute such products to high accuracy when $f$ is a rational function satisfying
certain additional properties.

To achieve our goal we prove the following result:

\vspace{3mm}
\noindent
{\bf Theorem.} {\em Let $f(z)$ be an analytic function without zeros in the disk $|z|<R\leq 1$ and let $(\epsilon_n)$
be a sequence of $\pm 1$. There exists unique sequence $(\alpha_n)$ such that the product
$f(0)\Pi_{n=1}^{\infty} (1+\epsilon_nz^n)^{\alpha_n}$ converges to $f$ uniformly on compact subsets of the disk $|z|<R$.
}

\vspace{3mm}
\noindent
(see Theorem~\ref{productrep}, where a simple way to compute the exponents $\alpha_n$ is stated).
As an illustration we mention the following formulas (see Examples \ref{example1},
\ref{example2}):
\[e^{-z}= \prod_{n=1}^{\infty}(1-z^n)^{\frac{\mu(n)}{n}}, \ \ \ \  e^{\frac{z}{z-1}}=\prod_{n=1}^{\infty}
(1-z^n)^{\frac{\phi(n)}{n}}, \]
valid for $|z|<1$, where $\mu$, $\phi$  are the M\"obius function and Euler function respectively.
After this work was completed, Pieter Moree kindly pointed to us an interesting paper
by G. Dahlquist \cite{euler}, where a product decomposition as in Theorem above
is obtained, and it is used to investigate analytic continuation of
certain Euler products. As a matter of fact, the idea to use such product decompositions for various functions
goes back to works of M\"obius \cite{mob} and later Landau (see, for example, \cite{landwal}).
Dahlquist's discussion of his result is somewhat brief, therefore we provide a detailed
proof including a careful analysis of the exponents $\alpha_n$. We hope that our paper will make this result more
widely known, which it fully deserves because of its
many applications. In addition to being instrumental for the numerical method described in Section~\ref{4},
it naturally leads to interesting arithmetic and combinatorial results by
studying the relations between the exponents $\alpha_n$ and the Taylor coefficients of $f$ and some related
functions. In Section 3, we will show an example
of such a result. Namely, we will obtain a short proof of the following theorem: if $A$ is an integral matrix,
$p$ is a prime number, and $n$ is a positive integer then the traces of $A^{p^n}$ and $A^{p^{n-1}}$ are congruent
modulo $p^n$. This result has been conjectured at the beginning of this century by V. I. Arnold, who
considered it as an analog of the classical Euler theorem. Arnold's conjecture  has been
the subject of several recent publications (among others, see \cite{a1}, \cite{a2}, \cite{vin}, \cite{zar2006},
\cite{zar2008}, \cite{mmbp}), where several different proofs can be found.
Let us mention yet another application. The products as in Theorem above have been considered as formal identities
in the theory of $q$-series, mainly in the case when the exponents $\alpha_n$ are integers (see \cite{andrews}).
Finally, the product decomposition has been used in a recent work \cite{primes}, where the authors have proved that
the regularized product of all prime numbers is equal to $4\pi^2$.

Returning to our original goal to get an accurate approximation of the product $\prod_p f(1/p)$, the strategy now is
the same as the one developed in \cite{moree}. Namely, using the decomposition
$f(z)=\Pi_{n=1}^{\infty} (1-z^n)^{\alpha_n}$ we get that
\[\prod_{p\geq t}f(1/p)=\prod_{p\geq t}\prod_{n=2}^\infty (1-p^{-n})^{\alpha_n}=\prod_{n=2}^\infty
\prod_{p\geq t}
(1-p^{-n})^{\alpha_n}=\prod_{n=2}^\infty\zeta(t,n)^{-\alpha_n}\]
where $t$ is sufficiently large and $\zeta(t,s)=\prod_{p\geq t}(1-p^{-s})^{-1}$ is a partial
zeta-function. The key observation behind our method is that the product on the right converges rapidly.
As an illustration, in Examples~\ref{e1} and \ref{e2} we compute first 50 decimal digits
of two constants appearing in analytic number theory. To the best of our knowledge,
this has not been done before. After writing a simple code, the computations
using PARI/GP and an ordinary laptop take only several seconds.

\paragraph*{\bf{Acknowledgments}}
We thank Maxim Korolev for providing us with information related to Example~\ref{e2}.
B.~Petrenko thanks Pieter Moree for very useful discussions of his work
\cite{moree} during Petrenko's visits to MPIM in
July-August of 2009 and in August 2010.

\section{Product decomposition}
For complex numbers $\alpha$ and $z$ such that $|z|<1$ we write $(1+z)^\alpha$ for
the binomial series $1+\sum_{n=1}^{\infty} {\alpha \choose n} z^n$. Then $(1+z)^{\alpha}=
e^{\alpha\Ln(1+z)}$, where $\Ln(1+z)=\sum_{n=1}^{\infty}(-1)^{n-1}z^n/n$ is the principal branch of
logarithm.

Recall that a sequence $(f_n)$ of functions on a topological space $X$ {\bf converges compactly}
to a function $f$ on $X$ if it converges uniformly to $f$ on every compact subset of $X$.
We will need the following well known consequence of the Residue Theorem:

\begin{lemma}\label{zero}
Let $(f_n)$ be a sequence of analytic functions in a domain $U$,
none of which assumes the value 0 in $U$. Suppose that this
sequence converges compactly on $U$ to a function $f$. Then either
$f=0$ or $f$ does not assume the value $0$ on $U$.
\end{lemma}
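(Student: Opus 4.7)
The plan is to argue by contradiction using the argument principle, which is the natural instance of the Residue Theorem alluded to in the statement. Assume that $f$ is not identically zero but nevertheless has a zero $z_0\in U$. Because the zeros of a nonzero analytic function are isolated, I can choose $r>0$ small enough so that the closed disk $\overline{D}(z_0,r)$ lies in $U$ and $f$ has no zeros on the boundary circle $\gamma=\partial D(z_0,r)$. By continuity and compactness, $|f|$ attains a positive minimum $\delta>0$ on $\gamma$.

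Next I would use uniform convergence on the compact set $\gamma$: for all sufficiently large $n$, $|f_n-f|<\delta/2$ on $\gamma$, hence $|f_n|>\delta/2$ there, in particular $f_n$ has no zero on $\gamma$. Since $f_n\to f$ compactly on $U$ and none of $f_n$ nor $f$ vanishes on $\gamma$, the standard fact that compact convergence of analytic functions implies compact convergence of the derivatives (Weierstrass) gives $f_n'\to f'$ uniformly on $\gamma$, and together with $1/f_n\to 1/f$ uniformly on $\gamma$ this yields
\[
\frac{f_n'}{f_n}\longrightarrow \frac{f'}{f}\qquad\text{uniformly on }\gamma.
\]
Therefore I may interchange the limit with the contour integral:
\[
\lim_{n\to\infty}\frac{1}{2\pi i}\oint_{\gamma}\frac{f_n'(z)}{f_n(z)}\,dz
=\frac{1}{2\pi i}\oint_{\gamma}\frac{f'(z)}{f(z)}\,dz.
\]

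Now the argument principle (the residue theorem applied to $f'/f$) interprets both sides as counts of zeros inside $\gamma$. The left-hand integral is $0$ for each $n$, because $f_n$ is nowhere zero in $\overline{D}(z_0,r)\subset U$. The right-hand integral equals the number of zeros of $f$ inside $\gamma$, counted with multiplicity, which is at least $1$ thanks to the zero at $z_0$. This contradiction shows that if $f\not\equiv 0$, then $f$ has no zero in $U$, completing the proof.

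The only potentially delicate point is ensuring that the passage to the limit inside the contour integral is justified, and this reduces to verifying that $f_n'/f_n\to f'/f$ uniformly on the fixed circle $\gamma$. Once the lower bound $|f|\ge\delta$ on $\gamma$ is in hand, both ingredients (uniform convergence of $f_n$ and of $f_n'$ on $\gamma$) are immediate from compact convergence and Weierstrass's theorem, so no real obstacle remains.
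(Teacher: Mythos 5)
Your proof is correct and follows essentially the same route as the paper's: both apply the argument principle by showing $f_n'/f_n \to f'/f$ uniformly on a small circle $\gamma$ around the putative zero, then pass to the limit in $\oint_\gamma f_n'/f_n = 0$ to force $\oint_\gamma f'/f = 0$ and conclude $f$ cannot vanish inside. Your write-up is somewhat more explicit than the paper's (spelling out the lower bound $\delta$ on $|f|$, the Weierstrass theorem for $f_n' \to f'$, and the contradiction framing), but the underlying argument is identical.
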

\begin{proof}
The function $f$ is analytic in $U$. Suppose that $f$ is not identically $0$.
Let $z_0\in U$. Then there is $\epsilon>0$ such that $f(z)\neq 0$ for all $z\neq z_0$ in
the disk $|z-z_0|\leq \epsilon$. It follows that $f_n'/f_n$ converges uniformly to $f'/f$
on the circle $\gamma$ with center $z_0$ and radius $ \epsilon$. Thus
$\int_{\gamma}f'(z)dz/f(z)=\lim_{n\to\infty} \int_{\gamma}f_n'(z)dz/f_n(z)=0$, as
$\int_{\gamma}f_n'(z)dz/f_n(z)=0$
(since $f_n'/f_n$ is analytic in $U$). It follows that $f(z_0)\neq 0$, as
otherwise the function $f'/f$ would have a simple pole at $z_0$ and the integral $\int_{\gamma}f'(z)dz/f(z)$
would not vanish.
\end{proof}

\begin{proposition}\label{product}
Consider the product $\Pi_{n=1}^{\infty} (1+\epsilon_nz^n)^{\alpha_n}$,
where $\epsilon_i=\pm 1$, $|z|<1$, and $\alpha_i\in \mathbb C$.
Let $0<R\leq 1$. The following conditions are equivalent.

\begin{enumerate}
\item The product converges compactly on the circle $|z|<R$.

\item The series $\sum_{n=1}^{\infty}
\alpha_n\Ln(1+\epsilon_nz^n)$ converges compactly on the circle
$|z|<R$.

\item The series $\sum_{n=1}^{\infty} \epsilon_n n \alpha_n z^{n-1}/(1+\epsilon_n z^n)$ converges
for $|z|< R$.

\item The series $\sum_{n=1}^{\infty} \alpha_n z^n$ converges for $|z|<R$.
\end{enumerate}
Moreover, if one of the above equivalent conditions holds, then the series in (2), (3), and (4) converge absolutely.
\end{proposition}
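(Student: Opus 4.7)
The plan is to route every implication through condition~(4), since $\sum\alpha_nz^n$ is an ordinary power series and its convergence on $|z|<R$ is equivalent to the clean Cauchy--Hadamard bound $\limsup|\alpha_n|^{1/n}\le 1/R$. First I would show that (4) forces (2) and (3) with absolute compact convergence, and (1) with compact convergence; then I would show that each of (1), (2), (3) in turn forces $\limsup|\alpha_n|^{1/n}\le 1/R$, which is~(4). Once this is in place, the ``moreover'' clause is automatic, because every hypothesis implies (4), and (4) already yields absolute convergence in (2), (3), and (4) itself.

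For (4)$\Rightarrow$(2), fix a compact $K\subset\{|z|<R\}$ on which $|z|\le r<R$, pick $r<r_1<R$, and combine the bound $|\alpha_n|\le M/r_1^n$ (from convergence at $|z|=r_1$) with the elementary estimate $|\Ln(1+w)|\le 2|w|$ for $|w|\le 1/2$ to bound the $n$-th term of (2) on $K$ by $2M(r/r_1)^n$ for all large $n$. For (4)$\Rightarrow$(3) the denominator $1+\epsilon_nz^n$ is bounded away from~$0$ on $K$ for large $n$, and the majorant $\sum n|\alpha_n|r^{n-1}$ has the same radius of convergence as $\sum\alpha_nz^n$, so the series in (3) converges absolutely on $K$. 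Finally, (4)$\Rightarrow$(1) follows by exponentiating: the $N$-th partial product equals $\exp(S_N(z))$ for $S_N$ the $N$-th partial sum in~(2), so compact convergence of $S_N$ forces compact convergence of the partial products to a nowhere-zero limit.

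For the converse, if (2) or (3) holds then in particular the series converges pointwise at each $z_0$ with $0<|z_0|<R$, so its $n$-th term tends to~$0$. Since $\Ln(1+\epsilon_nz_0^n)\sim\epsilon_nz_0^n$ as $n\to\infty$, condition~(2) forces $|\alpha_n||z_0|^n\to 0$; similarly, after bounding the denominator from below, condition~(3) forces $n|\alpha_n||z_0|^{n-1}\to 0$. In either case $\limsup|\alpha_n|^{1/n}\le 1/|z_0|$, and letting $|z_0|\uparrow R$ yields~(4). For (1)$\Rightarrow$(2), Lemma~\ref{zero} applied to the partial products (each nowhere zero on $|z|<R$) shows that their compact limit~$P$ is either identically zero or nowhere zero; in the nondegenerate case $P$ admits an analytic logarithm $L$ on the simply connected disk, and the normalization $S_N(0)=0=L(0)$ combined with $\exp(S_N)=P_N\to P=\exp(L)$ forces $S_N\to L$ compactly, giving~(2).

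The most delicate step is (1)$\Rightarrow$(2): a priori the partial sums $S_N$ could converge to $L$ only modulo varying multiples of $2\pi i$, so one must track a single analytic branch of the logarithm uniformly in $N$. Anchoring at $z=0$, where $S_N$ and $L$ both vanish, together with the continuity of each $S_N-L$ and the connectedness of the disk, is what pins down the correct global branch. With this in hand, all four conditions are equivalent, and the absolute-convergence clause follows from the fact that every one of them implies~(4).
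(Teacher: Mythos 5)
Your proposal is correct, and the hardest step, (1)$\Rightarrow$(2), is essentially the paper's argument: invoke Lemma~\ref{zero} to get a nonvanishing limit, take an analytic logarithm $L$ on the simply connected disk with $L(0)=0$, and pin down the branch of $S_N - L$ by anchoring at $z=0$ and using continuity and connectedness. (One small remark: you can dispense with the ``degenerate case'' entirely, since every partial product equals $1$ at $z=0$, so $P(0)=1\neq 0$.) Where you depart from the paper is in organizing the remaining implications. The paper proves the chain (1)$\Leftrightarrow$(2)$\Leftrightarrow$(3)$\Leftrightarrow$(4): it gets (2)$\Leftrightarrow$(3) by term-by-term differentiation of the logarithmic series and (3)$\Leftrightarrow$(4) by sandwiching each summand of (3) between constant multiples of $|n\alpha_n z^{n-1}|$. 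You instead make (4), rephrased via Cauchy--Hadamard as $\limsup|\alpha_n|^{1/n}\le 1/R$, the hub: (4) gives (2), (3), and hence (1) by direct geometric-series majorization, while (2)$\Rightarrow$(4) and (3)$\Rightarrow$(4) follow from the elementary observation that the $n$-th term of a convergent series tends to zero, combined with $\Ln(1+\epsilon_n z_0^n)\sim\epsilon_n z_0^n$ (resp.\ boundedness of the denominator). This sidesteps any appeal to termwise differentiation or antidifferentiation, and it makes the ``moreover'' clause about absolute convergence fall out for free, since every condition feeds through (4) and your estimates for (4)$\Rightarrow$(2),(3) are already absolute. The two routes are comparable in length; yours is a touch more elementary in the backward direction, while the paper's chain makes the structural link between (2) and (3) (differentiation of the log) more explicit.
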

\begin{proof}
Assume (1). Then the product defines an analytic function $f(z)$ on $|z|<R$ which does not vanish
at any point by Lemma~\ref{zero}
. Thus $h(z)=\log f(z)$ exists and is analytic on $|z|<R$ (we take here the logarithm
satisfying $\log f(0)=0$). Let $h_n(z)=\sum_{k=1}^{n} \alpha_k\Ln(1+\epsilon_kz^k)$. Let $r<R$.
Then $|f(z)|>B$ for some $B>0$ and all $z$ such that $|z|\leq r$.
For any $\epsilon>0$ there is $n$ such that $|f(z)-e^{h_{N}(z)}|<B\epsilon$ for all $N\geq n$.
It follows that $|1-e^{h_N(z)-h(z)}|< \epsilon$ for all $z$ such that $|z|\leq r$. For
$\epsilon< 1/2$, this implies that $\Ln(1-\epsilon)<\Re(h_N(z)-h(z))<\Ln(1+\epsilon)$
and $\Im(h_N(z)-h(z))\in \bigcup_{k\in \mathbb Z}(-\arccos(1-\epsilon)+2k\pi,
\arccos(1-\epsilon)+2k\pi)$. Since $\Im(h_N(z)-h(z))$ is continuous on the connected set
$|z|<R$ and it vanishes at $0$, we must have $\Im(h_N(z)-h(z))\in (-\arccos(1-\epsilon),
\arccos(1-\epsilon))$. This implies that $h_N(z)$ converges uniformly to $h(z)$ on $|z|\leq r$.
Since $r$ is an arbitrary positive number less than $R$, we see that (2) holds.

That (2) implies (1) is an immediate consequence of the equality $e^{h_n(z)}=
\Pi_{k=1}^{n} (1+\epsilon_kz^k)^{\alpha_k}$.

The equivalence of (2) and (3) follows from the equality
$h_n'(z)=\sum_{k=1}^{n} \epsilon_kk\alpha_k z^{k-1}/(1+\epsilon_k
z^k)$ and the observation that if the series in (3) converges on
$|z|<R$ then it converges compactly.

Finally, the equivalence of (3) and (4) is a consequence of the inequalities
$|k\alpha_k z^{k-1}|/2\leq |\epsilon_k k \alpha_k z^{k-1}/(1+\epsilon_k z^k)|\leq
 |k\alpha_k z^{k-1}|/(1-|z|)$ and the remark that if the series in (3) or (4) converges
 for $|z|<R$ then it converges absolutely and compactly.

The absolute convergence of the series in (3) and (4) is clear. The absolute convergence of the
series in (2) follows from the absolute convergence of the series (4) and the inequality
$|\Ln(1+z)|\leq 2|z|$, which holds for all sufficiently small $z$ (for example, $|z|\leq 1/2$ works).
\end{proof}

\begin{lemma}\label{vanish}
Suppose that the product $f(z)=\Pi_{n=k}^{\infty}
(1+\epsilon_nz^n)^{\alpha_n}$ converges compactly on $|z|<R$ for
some $R$ such that $0<R\leq 1$. Then $f^{(i)}(0)=0$ for $1\leq i<k$.
\end{lemma}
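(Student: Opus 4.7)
The plan is to reduce the claim to the finite partial products, where the vanishing of low-order derivatives at $0$ can be checked directly, and then invoke Weierstrass's theorem on compactly convergent sequences of analytic functions to pass to the limit.

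First, I would analyze a single factor $g_n(z) := (1+\epsilon_n z^n)^{\alpha_n}$ for $n \geq k$. The binomial series definition adopted at the start of Section~2 gives, for $|z|<1$,
\[ g_n(z) = 1 + \alpha_n\epsilon_n z^n + \binom{\alpha_n}{2}(\epsilon_n z^n)^2 + \cdots, \]
so $g_n(z) = 1 + z^k q_n(z)$ with $q_n$ analytic on $|z|<1$ (since $n \geq k$). In particular $g_n(0) = 1$ and $g_n^{(i)}(0) = 0$ for $1 \leq i < k$.

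Next I would observe by induction on $N$ that the partial product $P_N(z) := \prod_{n=k}^{N} g_n(z)$ has the same shape, namely $P_N(z) = 1 + z^k Q_N(z)$ with $Q_N$ analytic on $|z|<1$. The inductive step is immediate from the identity $(1+z^k A)(1+z^k B) = 1 + z^k\bigl(A+B+z^k AB\bigr)$, which shows that this form is preserved under multiplication. Consequently $P_N(0) = 1$ and $P_N^{(i)}(0) = 0$ for $1 \leq i < k$.

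Finally, by hypothesis $P_N \to f$ compactly on $|z| < R$, so Weierstrass's theorem on compactly convergent sequences of analytic functions yields $P_N^{(i)} \to f^{(i)}$ compactly on $|z|<R$ for every $i$. Evaluating at $z=0$ gives $f^{(i)}(0) = \lim_{N\to\infty} P_N^{(i)}(0) = 0$ for $1 \leq i < k$, which is the desired conclusion. There is no serious obstacle; the only care needed is the interchange of differentiation and limit, which is exactly what Weierstrass's theorem supplies. (Alternatively, one could apply Proposition~\ref{product} to write $f = e^{h}$ where $h(z) = \sum_{n \geq k} \alpha_n \Ln(1+\epsilon_n z^n)$ clearly vanishes to order $\geq k$ at $0$, but the partial-product argument is more elementary.)
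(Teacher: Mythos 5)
Your argument is correct, and it takes a genuinely different route from the paper. The paper's proof leans on Proposition~\ref{product}: it sets $h(z)=\sum_{n\geq k}\alpha_n\Ln(1+\epsilon_n z^n)$, notes that each summand (hence $h$) vanishes to order $\geq k$ at $0$, and then differentiates the identity $f=e^h$. You instead work directly with the partial products: each factor $(1+\epsilon_n z^n)^{\alpha_n}$ with $n\geq k$ has the shape $1+z^k q_n(z)$, this shape is stable under multiplication, so every partial product $P_N$ satisfies $P_N^{(i)}(0)=0$ for $1\leq i<k$, and Weierstrass's convergence theorem lets you pass the derivative to the limit. Your version is self-contained and elementary in that it bypasses the logarithm/exponential machinery of Proposition~\ref{product}, relying only on the standard fact that compact convergence of analytic functions is inherited by all derivatives; the paper's version is shorter in context because Proposition~\ref{product} has already been proved and the equality $f=e^h$ is available for free. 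You also correctly identified the paper's route as the alternative you sketch at the end. No gaps.
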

\begin{proof}
Let $h(z)=\sum_{n=k}^{\infty} \alpha_n\Ln(1+\epsilon_nz^n)$. By
Proposition~\ref{product}, the series on the right converges
compactly to $h$, and $f=e^h$. It is clear that $h^{(i)}(0)=0$ for
$1\leq i<k$, as each summand has this property. The lemma follows now
by differentiation of the equality $f=e^h$.
\end{proof}

\begin{proposition}\label{formul}
Suppose that the product $f(z)=\Pi_{n=1}^{\infty} (1+\epsilon_nz^n)^{\alpha_n}$,
where $\epsilon_i=\pm 1$, converges compactly on $|z|<R$ for some $R>0$.
Then $f(z)=1+\sum_{n=1}^{\infty}b_nz^n$, where
\begin{equation}\label{formula}
b_n=\sum {\alpha_1 \choose k_1}{\alpha_2 \choose k_2}\ldots {\alpha_n \choose k_n}\epsilon_1^{k_1}
\epsilon_2^{k_2}\ldots\epsilon_n^{k_n}
\end{equation}
and the summation extends over all non-negative integers $k_1, k_2, \ldots, k_n$ such that
$k_1+2k_2+\ldots +nk_n=n$.
\end{proposition}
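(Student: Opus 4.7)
The plan is to derive the formula by expanding each factor as its binomial series, multiplying the first $N$ factors using the Cauchy product, observing that the coefficient of $z^n$ stabilizes once $N \ge n$, and then passing to the limit $N \to \infty$ via the compact convergence of the partial products.

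Concretely, for $|z| < 1$ and each $j$,
\[
(1+\epsilon_j z^j)^{\alpha_j} \;=\; \sum_{k \ge 0} \binom{\alpha_j}{k}\epsilon_j^{k}\, z^{jk}
\]
converges absolutely, so multiplying $N$ such series together using the Cauchy product rule gives
\[
P_N(z) \;:=\; \prod_{j=1}^{N} (1+\epsilon_j z^j)^{\alpha_j} \;=\; \sum_{k_1,\ldots,k_N \ge 0} \Bigl(\prod_{j=1}^{N}\binom{\alpha_j}{k_j}\epsilon_j^{k_j}\Bigr)\, z^{k_1+2k_2+\cdots+Nk_N}.
\]
Grouping by the total exponent $m = k_1 + 2k_2 + \cdots + Nk_N$, the coefficient of $z^m$ is a finite sum since the bound $k_j \le m/j$ permits only finitely many tuples. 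For $m = n$ and any $N \ge n$, a tuple can contribute only if $k_j = 0$ for every $j > n$ (otherwise $jk_j > n$), so the coefficient of $z^n$ in $P_N$ equals exactly the right-hand side $b_n$ of \eqref{formula}, independently of $N \ge n$.

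To finish, since $P_N \to f$ compactly on $|z| < R$ (and we may assume $R \le 1$, as each factor is only defined as an analytic function there), the convergence is uniform on any circle $|z| = r < R$. Cauchy's coefficient formula
\[
[z^n]\,g \;=\; \frac{1}{2\pi i}\int_{|z|=r}\frac{g(z)}{z^{n+1}}\,dz
\]
then yields $[z^n]\,f = \lim_{N\to\infty}[z^n]\,P_N = b_n$, and $f(0)=1$ follows by setting $z=0$ in the product. I do not anticipate any real obstacle; the only point requiring care is the combinatorial observation that for $N \ge n$ the constraint $k_1 + 2k_2 + \cdots + Nk_N = n$ forces $k_j = 0$ whenever $j > n$, which is immediate.
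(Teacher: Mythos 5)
Your proof is correct and follows essentially the same route as the paper: expand each factor as a binomial series, multiply by the Cauchy product, and observe that only factors with index $\le n$ contribute to the coefficient of $z^n$. The one technical difference is that the paper passes to the limit via Lemma~\ref{vanish} (the tail product $\prod_{j>n}(1+\epsilon_j z^j)^{\alpha_j}$ has Taylor expansion $1+O(z^{n+1})$, proved via the logarithm), whereas you use compact convergence together with Cauchy's coefficient formula to identify $[z^n]f=\lim_N[z^n]P_N$; both are standard and the combinatorial core is identical.
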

\begin{proof}
By Lemma~\ref{vanish}, the coefficient $b_n$ coincides with the coefficient at $z^n$ in the Taylor expansion
of $\Pi_{k=1}^{n} (1+\epsilon_kz^k)^{\alpha_k}$. The result follows now from the binomial
series expansion and the Cauchy formula for multiplying power series.
\end{proof}

\begin{corollary}\label{integral}
Suppose that the sequences $b_1,b_2,\ldots$ and $\alpha_1,\alpha_2,\ldots$ are related by
\text{\rm (\ref{formula})}. Then all the numbers $b_1,b_2,\ldots $ are integers if and only if all
the $\alpha_i$ are integers.
\end{corollary}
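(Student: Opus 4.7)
The plan is to prove the two implications of the corollary separately, both by direct inspection of formula (\ref{formula}). The key combinatorial observation is that the constraint $k_1 + 2k_2 + \cdots + nk_n = n$ with $k_i \geq 0$ forces $k_n \in \{0,1\}$, and if $k_n = 1$ then $k_1 = \cdots = k_{n-1} = 0$. Thus there is exactly one summand in the expression for $b_n$ that involves $\alpha_n$ at all, namely the one corresponding to the partition $k_n = 1$, which contributes $\binom{\alpha_n}{1}\epsilon_n = \epsilon_n \alpha_n$.

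For the easy direction, I would note that for every integer $m$ and every non-negative integer $k$, the binomial coefficient $\binom{m}{k} = m(m-1)\cdots(m-k+1)/k!$ is an integer (this holds regardless of the sign of $m$). Hence if all $\alpha_i$ are integers, each summand in (\ref{formula}) is an integer, and so is the finite sum $b_n$.

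For the other direction, I would proceed by induction on $n$. Using the observation above, rewrite
\[ b_n = \epsilon_n \alpha_n + R_n(\alpha_1, \ldots, \alpha_{n-1}), \]
where $R_n$ is a polynomial in $\alpha_1, \ldots, \alpha_{n-1}$ (with rational coefficients depending on the $\epsilon_i$) whose terms are products of binomial coefficients $\binom{\alpha_i}{k_i}$. For the base case $n = 1$, one has simply $b_1 = \epsilon_1 \alpha_1$, so $\alpha_1 = \epsilon_1 b_1 \in \mathbb{Z}$. For the inductive step, assume $\alpha_1, \ldots, \alpha_{n-1}$ are integers; then $R_n$ is a sum of integer binomial coefficients and is itself an integer, whence $\alpha_n = \epsilon_n (b_n - R_n) \in \mathbb{Z}$.

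There is no real obstacle here: once one extracts $\alpha_n$ from the sum using the partition constraint, the argument is a one-line induction. The only point worth noting carefully is that integrality of $\binom{\alpha}{k}$ for integer $\alpha$ holds in both signs, which makes both implications symmetric in flavor.
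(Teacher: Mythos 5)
Your proof is correct and follows essentially the same approach as the paper: the easy direction uses integrality of $\binom{m}{k}$ for integer $m$, and the converse is an induction on $n$ using the observation that in the formula for $b_n$ the only term involving $\alpha_n$ is $\epsilon_n\alpha_n$. You have simply spelled out the paper's ``straightforward induction'' in slightly more detail.
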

\begin{proof}
Since $\alpha \choose k$ are integers for any integers $\alpha$ and $k$, the integrality of the
$\alpha_i$'s implies the integrality of the $b_i$'s. Note that in the formula $(1)$ for $b_n$
the only contribution of $\alpha_n$ is the monomial $\epsilon_n \alpha_n$. This observation and
a straightforward induction on $n$ show that the integrality of $b_i$'s implies the integrality
of $\alpha_i$'s.
\end{proof}

\begin{lemma}\label{recursion}
Let $(H_n)$ be a sequence defined recursively by $H_1=1$ and $\displaystyle H_n=\sum_{d|n, d<n}H_d$.
Then $0<H_n\leq n^2$ for every $n$. In addition, if $(b_n)$ is any sequence and the sequence $(a_n)$
is defined by $a_n=\sum_{d|n}b_dH_{n/d}$, then $\displaystyle b_n=a_n-\sum_{d|n, d<n}a_d$.
\end{lemma}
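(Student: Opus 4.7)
The lemma splits into three claims, which I would tackle sequentially, all by induction or direct manipulation of the recursion.

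Positivity of $H_n$ is immediate by strong induction: $H_1=1>0$, and for $n>1$ the integer $1$ is a proper divisor of $n$, so $H_n\geq H_1=1$.

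For the upper bound $H_n\leq n^2$, I would again use strong induction. The base case $n=1$ is clear; for $n>1$ the inductive hypothesis gives
$$
H_n = \sum_{d\mid n,\,d<n} H_d \;\leq\; \sum_{d\mid n,\,d<n} d^2 \;=\; \sigma_2(n)-n^2,
$$
where $\sigma_2(n)=\sum_{d\mid n}d^2$. The key step is to bound $\sigma_2(n)/n^2$. Reindexing $d\mapsto n/d$ on the divisor set gives
$$
\frac{\sigma_2(n)}{n^2} \;=\; \sum_{d\mid n}\frac{1}{d^2} \;\leq\; \sum_{d=1}^{\infty}\frac{1}{d^2} \;=\; \frac{\pi^2}{6} \;<\; 2,
$$
whence $H_n \leq (\pi^2/6 - 1)n^2 < n^2$. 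I expect this to be the main subtle point: coarser estimates such as (number of divisors) times $(n/2)^2$ grow too fast, so one really does need the absolute convergence of $\sum 1/d^2$ combined with the numerical fact $\pi^2/6<2$.

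For the inversion formula, the plan is to substitute the definition of $a_d$ and reindex. Writing
$$
\sum_{d\mid n,\,d<n} a_d \;=\; \sum_{d\mid n,\,d<n}\sum_{e\mid d} b_e\, H_{d/e}
$$
and setting $f=d/e$, the pair $(e,f)$ ranges precisely over $e\mid n$, $e<n$ and $f\mid n/e$, $f<n/e$. Since $e<n$ forces $n/e>1$, the inner sum over $f$ equals $H_{n/e}$ by the defining recursion. Hence $\sum_{d\mid n,\,d<n} a_d = \sum_{e\mid n,\,e<n} b_e H_{n/e}$, and subtracting this from $a_n = b_n H_1 + \sum_{e\mid n,\,e<n} b_e H_{n/e} = b_n + \sum_{e\mid n,\,e<n} b_e H_{n/e}$ yields $b_n$, as claimed.
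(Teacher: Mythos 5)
Your proof is correct and takes essentially the same approach as the paper: the $n^2$ bound by strong induction using $\sum_{d\mid n}d^{-2}\le\zeta(2)=\pi^2/6<2$, and the inversion by observing that the recursion for $H$ makes the inner sums telescope. The only presentational difference is that the paper phrases the inversion in the language of Dirichlet convolution (introducing $J_1=1$, $J_n=-1$ for $n>1$, noting $H*J=E$, hence $a=b*H\Rightarrow b=a*J$), whereas you carry out the double-sum reindexing by hand; these are the same computation.
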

\begin{proof}
Define a sequence $(J_n)$ by $J_1=1$ and $J_n=-1$ for $n>1$. The definition of $(H_n)$
is equivalent to the equality $(H_n)*(J_n)=(E_n)$, where $*$ denotes the Dirichlet
convolution, $E_1=1$, and $E_n=0$ for $n>1$.
This means that $(H_n)$ and $(J_n)$ are inverses of each other under the operation $*$.
Thus, if $(a_n)=(b_n)*(H_n)$, then $(b_n)=(a_n)*(J_n)$. This proves the second part of the lemma.
For the first part, recall that $\sum_{d|n}d^s\leq n^s\prod_{p|n}(1-p^{-s})^{-1}$ for $s>0$.
It follows that $\sum_{d|n}d^s\leq n^s\zeta(s)$, where $\zeta$ is the Riemann zeta function.
Let $s$ be such that $\zeta(s)\leq 2$. Then we claim that $H_n\leq n^s$. Indeed, this
is clear for $n=1$. Assuming that it holds for indexes less than $n$, we have
$H_n=\sum_{d|n, d<n}H_d\leq \sum_{d|n, d<n}d^s\leq \zeta(s)n^s-n^s\leq n^s$. Thus our claim follows
by induction. Taking $s=2$ yields the first part of the lemma, because $\zeta(2)=\pi^2/6<2$.
\end{proof}

\begin{lemma}\label{mainlemma}
Let $g(z)=\sum_{n=1}^{\infty}g_nz^{n-1}$ be analytic in the circle $|z|<R$ for some $0<R\leq 1$.
Let $\epsilon_i=\pm 1$.
Then there exists a unique
sequence $(\alpha_k)$ such that
\begin{equation}\label{exist}
 g(z)=\sum_{n=1}^{\infty} \epsilon_nn\alpha_n z^{n-1}/(1+\epsilon_n z^n)
\end{equation}
for all $z$ such that $|z|<R$. Moreover,
\begin{equation}\label{coef}
-g_n= \sum_{d|n}d\alpha_d (-\epsilon_d)^{n/d-1}.
\end{equation}
\end{lemma}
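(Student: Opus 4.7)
The plan is to treat (\ref{coef}) both as a necessary condition for (\ref{exist}) (which forces uniqueness) and as a recipe for constructing the $\alpha_n$, then to use Proposition~\ref{product} together with Lemma~\ref{recursion} to upgrade the resulting formal matching of Taylor coefficients to a genuine identity between analytic functions on $|z|<R$.

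First I would expand the generic summand of (\ref{exist}) as a double series,
\[
\frac{\epsilon_n n\alpha_n z^{n-1}}{1+\epsilon_n z^n}=n\alpha_n\sum_{j=0}^{\infty}\epsilon_n(-\epsilon_n)^{j}z^{nj+n-1},
\]
and read off the coefficient of $z^{N-1}$ by letting $n$ range over the divisors of $N$; the resulting identity is (\ref{coef}). Since the only term involving $\alpha_N$ is the one with $d=N$, which contributes the nonzero coefficient $N(-\epsilon_N)^{0}=N$, the system (\ref{coef}) is lower triangular with nonvanishing diagonal, equivalent to the recursion
\[
N\alpha_N=-g_N-\sum_{d\mid N,\,d<N}d\alpha_d(-\epsilon_d)^{N/d-1}.
\]
This determines $(\alpha_n)$ uniquely, giving uniqueness and furnishing the candidate sequence that must still be shown to work.

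By condition (4) of Proposition~\ref{product} it suffices to prove that $\sum\alpha_n z^n$ converges for $|z|<R$. Setting $\beta_n=n|\alpha_n|$ and taking absolute values in the recursion yields $\beta_n\le|g_n|+\sum_{d\mid n,\,d<n}\beta_d$, the inequality analogue of the identity $b_n=a_n-\sum_{d\mid n,\,d<n}a_d$ appearing in Lemma~\ref{recursion}. A short induction majorizes $\beta_n$ by the exact solution of that identity, yielding $\beta_n\le\sum_{d\mid n}|g_d|H_{n/d}\le n^{2}\sum_{d\mid n}|g_d|$. Since $g$ is analytic on $|z|<R$, Cauchy's estimates give $|g_n|\le C\rho^{-n}$ for any $\rho<R$ and a suitable constant $C$; because $\rho<1$ the term $d=n$ dominates the divisor sum, so $|\alpha_n|=O(n^{2}\rho^{-n})$, and $\sum\alpha_n z^n$ converges on $|z|<\rho$, hence, $\rho<R$ being arbitrary, on all of $|z|<R$.

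Once this bound is in place, Proposition~\ref{product} promotes the convergence of $\sum\alpha_n z^n$ to compact convergence of the series in (\ref{exist}) on $|z|<R$; the limit is an analytic function whose Taylor coefficients, by the very derivation of (\ref{coef}), coincide with those of $g$, so the two functions agree throughout the disk. The principal obstacle is the polynomial growth bound for $|\alpha_n|$: one has to recognize the absolute-value recursion as dominated by the Dirichlet-convolution inversion set up in Lemma~\ref{recursion}, which is precisely what makes the estimate $H_n\le n^{2}$ applicable and produces the required polynomial control on $|\alpha_n|$.
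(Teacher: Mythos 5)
Your proposal matches the paper's proof line for line: expand each summand of (\ref{exist}) geometrically, compare coefficients of $z^{N-1}$ to obtain the divisor-sum recursion, note that its lower-triangular structure forces uniqueness, bound $n|\alpha_n|$ by the Dirichlet convolution $\sum_{d|n}|g_d|H_{n/d}$ via the induction in Lemma~\ref{recursion}, and close by condition (4) of Proposition~\ref{product}. The only deviation is in the last inch: the paper uses $H_n\le n^2$ to get $|\alpha_n|\le n\sum_{k=1}^n|g_k|$ and then observes, using $R\le 1$, that $\sum_n n\bigl(\sum_{k\le n}|g_k|\bigr)z^n$ has the same radius of convergence as $\sum_n g_nz^n$; you instead invoke a Cauchy estimate $|g_n|\le C(\rho)\rho^{-n}$ for each $\rho<R$ and let $\rho\to R$. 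Both routes deliver the convergence of $\sum\alpha_nz^n$ on $|z|<R$, and since the $\alpha_n$ do not depend on $\rho$, your version is fine.

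One caveat, which originates in the paper's statement rather than in your work: if you actually read off the coefficient of $z^{N-1}$ from your expansion, the divisor $d\mid N$ contributes $d\alpha_d\,\epsilon_d(-\epsilon_d)^{N/d-1}=-d\alpha_d(-\epsilon_d)^{N/d}$, so the identity that emerges is $-g_n=\sum_{d|n}d\alpha_d(-\epsilon_d)^{n/d}$, with exponent $n/d$, not $n/d-1$ as written in (\ref{coef}). The two formulas agree precisely when $\epsilon_d=-1$, which is the only case used in the examples, in (\ref{integral1}), and in Section~\ref{4}, so nothing downstream breaks; but for general signs they differ by the factor $-\epsilon_d$, as the one-term check $\epsilon_n\equiv 1$, $\alpha_1=1$, $\alpha_k=0$ for $k\ge 2$, $g(z)=1/(1+z)$ already reveals at $n=1$. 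You asserted that your expansion yields (\ref{coef}) without carrying out the simplification of $\epsilon_d(-\epsilon_d)^{N/d-1}$; the triangular structure and the convergence argument are untouched, but the explicit formula should read $(-\epsilon_d)^{n/d}$.
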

\begin{proof}
Note that
\[\frac{-\epsilon_nn\alpha_n z^{n}}{1+\epsilon_n z^n}=\sum_{k=1}^{\infty}(-\epsilon_n)^{k-1}n\alpha_nz^{nk}.
\]
Suppose first that (\ref{exist}) holds. Then the right hand side of $(\ref{exist})$ converges
compactly on $|z|<R$ and therefore
\[ -\sum_{n=1}^{\infty} g_nz^n=-zg(z)=\sum_{n=1}^{\infty}\sum_{k=1}^{\infty}
(-\epsilon_n)^{k-1}n\alpha_nz^{nk}=\]
\[=\sum_{n=1}^{\infty}\sum_{d|n}d\alpha_d (-\epsilon_d)^{n/d-1}z^n.\]
Comparing the coefficients at $z^n$ we get the formulas $(\ref{coef})$.
A straightforward induction establishes that for any sequence $(g_n)$ there is a unique
sequence $(\alpha_n)$ such that $(\ref{coef})$ holds for all $n$. This shows the uniqueness.
For the existence, it remains to show that the series $\sum_{n=1}^{\infty}
\epsilon_nn\alpha_n z^{n-1}/(1+\epsilon_n z^n)$ converges for
the sequence $(\alpha_n)$ defined by $(\ref{coef})$ and all $z$ such that $|z|<R$.
By Proposition~\ref{product}, it suffices to show that $\sum\alpha_n z^n$ converges
for $|z|<R$. We claim that $|n\alpha_n|\leq \sum_{d|n}|g_d|H_{n/d}$, where $(H_n)$ is the
sequence from Lemma~\ref{recursion}. Indeed, for $n=1$ this is clear. Assuming that it holds for
indexes less than $n$, we see that
\[ |n\alpha_n|\leq |g_n|+\sum_{d|n, d<n}d|\alpha_d|\leq |g_n|+\sum_{d|n, d<n}\sum_{e|d}|g_e|H_{d/e}=
\sum_{d|n}|g_d|H_{n/d}\]
(the last equality holds by Lemma~\ref{recursion}). Thus our claim follows by induction.
Since $H_n\leq n^2$ by Lemma~\ref{recursion}, we see that
\begin{equation}\label{pm}
|\alpha_n|\leq n\sum_{k=1}^n|g_k|.
\end{equation}
Since $R\leq 1$ and $\sum_n g_nz^n$ converges for $|z|<R$, each of the following series
also converges for $|z|<R$ by standard properties of power series: $\sum_n |g_n|z^n$, $\sum_n (\sum_{k=1}^n |g_k|)z^n$,
$\sum_n n(\sum_{k=1}^n |g_k|)z^n$. By (\ref{pm}), the convergence of the last series implies the convergence of
$\sum_n \alpha_n z^n$.
\end{proof}

\begin{remark}\label{remark} {\rm
The convergence of $\sum_n \alpha_n z^n$ can be obtained in a different way as follows.
Let $(\hat{\alpha}_n)$ be the sequence defined by $-g_n=\sum_{d|n}d\hat{\alpha}_d$ (which
is the sequence $(\alpha_n)$ obtained when $\epsilon_n=-1$ for all $n$). Then
the inequality $|n\hat{\alpha}_n|\leq \sum_{d|n}|g_d|$ follows easily from M\"obius inversion
formula. This implies the convergence of $\sum_n \hat{\alpha}_nz^n$. Now note the following
identity:
\[\frac{nz^{n-1}}{1-z^n}=\frac{nz^{n-1}}{1+\epsilon_nz^n}+
\frac{1+\epsilon_n}{2}\frac{2nz^{2n-1}}{1-z^{2n}}.
\]
Using this formula, we can rewrite the series
$\sum_{n=1}^{\infty} -n\hat{\alpha}_n z^{n-1}/(1- z^n)$ term by term, starting with $n=1$, into
$\sum_{n=1}^{\infty} \epsilon_nn\alpha_n z^{n-1}/(1+\epsilon_n z^n)$. It is not hard to see
that for $n=2^sm$, where $m$ is odd, the $\alpha_n$ obtained in this way is of the form
$\pm \hat{\alpha}_{m_1}\pm\ldots \pm\hat{\alpha}_{m_t}$, where $m_i=2^{s_i}m$ and
$s_1<s_2<\ldots<s_t\leq s$. This observation and the convergence of $\sum_n \hat{\alpha}_nz^n$
easily imply the convergence of $\sum_n \alpha_nz^n$.
}
\end{remark}

\begin{theorem}\label{productrep}
Let $f(z)$ be an analytic function without zeros in the disk $|z|<R\leq 1$ and let $(\epsilon_n)$
be a sequence of $\pm 1$. Then there exists a unique sequence $(\alpha_n)$ such that the product
$f(0)\Pi_{n=1}^{\infty} (1+\epsilon_nz^n)^{\alpha_n}$ converges compactly to $f$
on $|z|<R$. Moreover, if $f(z)=f(0)(1+\sum_{n=1}^{\infty}b_nz^n)$ and $f'(z)/f(z)=\sum_{n=1}^{\infty}g_nz^{n-1}$
then the following formulas hold:

\begin{equation}\label{formulaa}
b_n=\sum {\alpha_1 \choose k_1}{\alpha_2 \choose k_2}\ldots {\alpha_n \choose k_n}\epsilon_1^{k_1}
\epsilon_2^{k_2}\ldots\epsilon_n^{k_n},
\end{equation}

\begin{equation}\label{newton}
nb_n=g_n+\sum_{k=1}^{n-1}b_kg_{n-k},
\end{equation}

\begin{equation}\label{coeff}
-g_n= \sum_{d|n}d\alpha_d (-\epsilon_d)^{n/d-1}.
\end{equation}
\end{theorem}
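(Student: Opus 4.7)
The plan is to reduce to the case $f(0)=1$ and then extract $(\alpha_n)$ from the logarithmic derivative of $f$, using Lemma~\ref{mainlemma} as the main engine. Set $F(z) = f(z)/f(0)$, which is analytic, nowhere zero on $|z|<R$, and satisfies $F(0)=1$. Because $F$ is zero-free, $g(z) := F'(z)/F(z) = f'(z)/f(z)$ is analytic on $|z|<R$; write its power series as $g(z) = \sum_{n=1}^{\infty} g_n z^{n-1}$.

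Next, I would apply Lemma~\ref{mainlemma} to this $g$ to produce a unique sequence $(\alpha_n)$ for which
\[
g(z) = \sum_{n=1}^{\infty} \frac{\epsilon_n n \alpha_n z^{n-1}}{1+\epsilon_n z^n}
\]
on $|z|<R$; this sequence automatically satisfies (\ref{coeff}). By Proposition~\ref{product}, compact convergence of the series in condition (3) is equivalent to compact convergence of the product in (1), so $G(z) := \prod_{n=1}^{\infty} (1+\epsilon_n z^n)^{\alpha_n}$ defines an analytic function on $|z|<R$ with $G(0)=1$, and by Lemma~\ref{zero} it is zero-free.

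The crux is the identification $G = F$, which I would handle by comparing logarithmic derivatives. The argument establishing (1)$\Leftrightarrow$(2) in Proposition~\ref{product} shows that $\sum \alpha_n \Ln(1+\epsilon_n z^n)$ converges compactly to a branch of $\log G$; termwise differentiation then yields $G'/G = g = F'/F$. Hence $(G/F)' \equiv 0$, and $G(0) = F(0) = 1$ forces $G = F$, so $f(z) = f(0)\prod_{n=1}^{\infty}(1+\epsilon_n z^n)^{\alpha_n}$. Uniqueness follows at once: any competing sequence $(\beta_n)$ representing $f$ would, after taking logarithmic derivatives through Proposition~\ref{product}, satisfy the same identity in Lemma~\ref{mainlemma}, forcing $\beta_n = \alpha_n$.

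The three displayed formulas then require only bookkeeping. Formula (\ref{formulaa}) is exactly Proposition~\ref{formul} applied to $F$; (\ref{coeff}) is delivered by Lemma~\ref{mainlemma}; and (\ref{newton}) is a consequence of $f'(z) = f(z)g(z)$: expanding $f(z)/f(0) = 1 + \sum b_n z^n$ and equating coefficients of $z^{n-1}$ in $(1 + \sum_{k\geq 1} b_k z^k)\sum_{k\geq 1} g_k z^{k-1} = \sum_{n\geq 1} n b_n z^{n-1}$ yields the recursion. The only real obstacle is the identification $G = F$, where one must be confident that compact convergence of the product permits termwise logarithmic differentiation; everything else reduces to invocations of the lemmas and propositions already established.
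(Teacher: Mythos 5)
Your proof is correct and takes essentially the same route as the paper: reduce to $f(0)=1$, apply Lemma~\ref{mainlemma} to $g = f'/f$ to produce $(\alpha_n)$ and obtain (\ref{coeff}), then invoke Propositions~\ref{product} and~\ref{formul} together with $f'=fg$ for the remaining claims. The one place you go beyond the paper's wording is in making explicit the identification of the product with $f$ via comparison of logarithmic derivatives and normalization at $0$; the paper compresses this into a one-line appeal to Proposition~\ref{product}, and your fleshed-out version of that step is accurate.
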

\begin{proof}
We may assume that $f(0)=1$. The function $g(z)=f'(z)/f(z)$ is analytic in $|z|<R$. By Lemma~\ref{mainlemma},
there exists a unique sequence $(\alpha_k)$ such that
\begin{equation}
 g(z)=\sum_{n=1}^{\infty} \epsilon_nn\alpha_n z^{n-1}/(1+\epsilon_n z^n)
\end{equation}
for all $z$ such that $|z|<R$.
By Proposition~\ref{product}, we get $\log f(z)=\sum_{n=1}^{\infty} \alpha_n\Ln(1+\epsilon_nz^n)$
and $f(z)=\Pi_{n=1}^{\infty} (1+\epsilon_nz^n)^{\alpha_n}$.

Formula (\ref{formulaa}) has been obtained in Proposition~\ref{formul}. Formula (\ref{newton}) follows from
the equality $f'=fg$, i.e. from
\[\sum_{n=1}^{\infty}nb_nz^{n-1}=\left(1+\sum_{n=1}^{\infty}b_nz^n\right)\sum_{n=1}^{\infty}g_nz^{n-1}.
\]
Finally, (\ref{coeff}) has been established in (\ref{coef}) of Lemma~\ref{mainlemma}.
\end{proof}

\begin{remark}
{\rm The three natural choices for the sequence $(\epsilon_n)$ are $\epsilon_n=-1$ for all $n$,
$\epsilon_n=1$ for all $n$, and $\epsilon_n$ such that $\alpha_n$ has non-negative
real parts for all $n$. That the third choice always exists follows easily from the rewriting
procedure described in Remark~\ref{remark}. Unless some of the $\hat{\alpha}_n$'s are purely
imaginary, such a sequence $(\epsilon_n)$ is unique.
}
\end{remark}

\begin{remark}{\rm
Let $(\epsilon_n)$ be a sequence of $\pm 1$.
Starting with a function $f(z)=1+\sum_{n=1}^{\infty}b_n z^n$, analytic and without zeros
in $|z|<R$, we can compute the exponents $\alpha_n$ recursively in any one of the following ways:

\begin{enumerate}[\rm (i)]
\item Using formulas (\ref{formulaa}).

\item By Proposition~\ref{formul}, the sequence $(\alpha_n)$ is obtained recursively by the following
rule: $\epsilon_{n+1}\alpha_{n+1}$ is the coefficient at $z^{n+1}$ in the Taylor expansion of
$f(z)\Pi_{k=1}^{n} (1+\epsilon_kz^k)^{-\alpha_k}$.

\item Using (\ref{coeff}) and (\ref{newton}).
\end{enumerate}

It is intriguing that Corollary~\ref{integral} does not seem to be easily derivable just from (\ref{coeff}) and
(\ref{newton}), even though it is a straightforward consequence of (\ref{formulaa}).
}
\end{remark}

\begin{example}\label{example1}{\rm
We apply Theorem~\ref{productrep} to the exponential function $f(z)=e^z$ and $\epsilon_n=-1$ for all $n$.
Since $f'/f=1$, we
see that $g_1=1$ and $g_n=0$ for $n>1$. By (\ref{coeff}) and the M\"obius inversion formula we easily
get $\alpha_n=-\mu(n)/n$. Thus we have the following product expansion:
\begin{equation}\label{expo}
e^{-z}= \prod_{n=1}^{\infty}(1-z^n)^{\frac{\mu(n)}{n}}
\end{equation}
which converges for $|z|<1$. Formula (\ref{expo}) is not new. It has been stated already in \cite{mob}
(see formula (13) therein).
Now it is well known that $\sum_{n=1}^{\infty}\mu(n)/n=0$ (this equality, conjectured by Euler and proved
by von Mangoldt, is equivalent to the prime number theorem). Thus we may write
\begin{equation}\label{expo1}
e^{-z}= \prod_{n=1}^{\infty}\left(\frac{1-z^n}{1-z}\right)^{\frac{\mu(n)}{n}}.
\end{equation}
Taking $z=1$ leads to the equality
\[ e^{-1}=\prod_{n=1}^{\infty}n^{\frac{\mu(n)}{n}}\]
or, equivalently,
\begin{equation}\label{mu}
-1=\sum_{n=1}^{\infty}\frac{\mu(n)\ln n}{n}.
\end{equation}
Of course, what we did above is just a heuristic argument, as (\ref{expo1}) is valid only for $|z|<1$.
Nevertheless, (\ref{mu}) is correct and it has been stated by M\"obius \cite{mob} (who used heuristic
arguments similar to ours, see his formula (21)) and proved by E. Landau
\cite{lan1}. We hope that the above heuristic argument provides evidence that
the product decomposition established in Theorem~\ref{productrep} may be a source of interesting results in
number theory. Yet another application will be discussed in the next section.
}
\end{example}

\begin{example}\label{example2}{\rm
We apply Theorem~\ref{productrep} to the exponential function $f(z)=e^{\frac{z}{z-1}}$ and $\epsilon_n=-1$ for
all $n$. Since $f'/f=-1/(1-z)^2=\sum_{n=1}^{\infty}(-n)z^{n-1}$, we
see that $g_n=-n$ for all $n$. By (\ref{coeff}) and the M\"obius inversion formula we easily
get $\alpha_n=\phi(n)/n$. Thus we have the following product expansion:
\begin{equation}\label{expo2}
e^{\frac{z}{z-1}}= \prod_{n=1}^{\infty}(1-z^n)^{\frac{\phi(n)}{n}}
\end{equation}
which converges for $|z|<1$.
}
\end{example}

\section{Arnold's Conjecture}
The recursive formulas (\ref{newton}) are often called formulas of Newton. More precisely,
substituting  $q_n=-g_n$ we get
\begin{equation}\label{newton1}
 q_n+b_1q_{n-1}+\ldots +b_{n-1}q_1+nb_n=0.
\end{equation}
Newton observed in his {\em Arithmetica Universalis}, published in 1707, that when
$-b_1,\ldots, (-1)^kb_k$ are the elementary symmetric functions of $x_1, \ldots, x_k$
(and $b_n=0$ for $n>k$) and $q_n=x_1^n+\ldots + x_k^n$ then the relations (\ref{newton1})
hold.
Perhaps a bit less known are the following explicit formulas, which (in the case of symmetric
polynomials) go back to Girard (1629)
and Waring (1762):

\begin{equation}
q_n=n\sum (-1)^{k_1+k_2+\ldots +k_n}\frac{(k_1+k_2+\ldots +k_n-1)!}{k_1!k_2!\ldots k_n!}b_1^{k_1}
b_2^{k_2}\ldots b_n^{k_n},
\end{equation}

\begin{equation}
b_n=\sum\frac{(-1)^{k_1+k_2+\ldots +k_n}}{k_1!k_2!\ldots k_n!}\left(\frac{q_1}{1}\right)^{k_1}
\left(\frac{q_2}{2}\right)^{k_2}\ldots \left(\frac{q_n}{n}\right)^{k_n}.
\end{equation}
where, in both formulas, the summation extends over all non-negative integers $k_1,k_2,\ldots, k_n$
such that $k_1+2k_2+\ldots +nk_n=n$. See \cite{gould} for more about these formulas.
As observed by Moree in \cite{moree} (and by many others before), Newton's formulas relating the symmetric functions and
the power sums follow easily from (\ref{newton}). In fact, if $f(z)=1+b_1z+\ldots + b_kz^k$
is a polynomial, then $f(z)=(1-x_1z)(1-x_2z)\ldots (1-x_kz)$, where $x_1,\ldots, x_k$ are
the roots of the reciprocal polynomial $z^k+b_1z^{k-1}+\ldots + b_k$. It follows that
\[ \frac{f'(z)}{f(z)}=\sum_{j=1}^n \frac{-x_j}{1-x_jz}=\sum_{n=1}^{\infty}
 -(x_1^n+x_2^n+\ldots +x_k^n)z^{n-1}.\]
Thus $g_n=-(x_1^n+x_2^n+\ldots +x_k^n)$ and Newton's result follows from (\ref{newton}).

Now let us apply (\ref{coeff}) with $\epsilon_n=-1$ for
all $n$. By the M\"obius inversion formula, we get
\begin{equation}\label{integral1}
n\alpha_n=-\sum_{d|n}g_d\mu(n/d).
\end{equation}
Assume now that $b_1,\ldots, b_k$ are integers. Then, by Corollary~\ref{integral},
all $\alpha_n$ are integers too. Therefore we get the following result:

\begin{theorem}\label{arnold}
Let $x_1,\ldots, x_k$ be the roots of a monic integral polynomial $q(x)$ of degree $k$ and let
$q_n=x_1^n+x_2^n+\ldots +x_k^n$. Then
\[\sum_{d|n}q_d\mu(n/d)\equiv 0 \mo n.\]
\end{theorem}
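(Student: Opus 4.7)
\medskip

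\noindent\textbf{Proof proposal.} The plan is to realize the expression $\sum_{d\mid n} q_d\mu(n/d)$ as $n\alpha_n$ for the exponent sequence furnished by Theorem~\ref{productrep} applied to the reciprocal polynomial of $q(x)$, and to conclude integrality from Corollary~\ref{integral}.

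\medskip

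First I would set
\begin{equation*}
f(z) \;=\; \prod_{j=1}^{k}(1-x_j z).
\end{equation*}
Since $q$ is monic with integer coefficients, the coefficients $b_n$ of $f(z)=1+\sum_{n=1}^k b_n z^n$ are (up to signs) the elementary symmetric functions of $x_1,\ldots,x_k$, hence integers; for $n>k$ we have $b_n=0$. Next, as already recorded in the text preceding the theorem,
\begin{equation*}
\frac{f'(z)}{f(z)} \;=\; \sum_{j=1}^{k} \frac{-x_j}{1-x_j z} \;=\; -\sum_{n=1}^{\infty} q_n z^{n-1},
\end{equation*}
so $g_n=-q_n$ for all $n\ge 1$.

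To invoke Theorem~\ref{productrep} I need $f$ to be analytic and zero-free on some disk $|z|<R$ with $R\le 1$. Choose $R=\min\bigl(1,\,1/\max_j|x_j|\bigr)$ (with the convention $R=1$ when all $x_j=0$); then $f$ is a polynomial (hence entire), and the only zeros of $f$ lie at $z=1/x_j$ with $|1/x_j|\ge R$, so $f$ has no zeros in $|z|<R$. Applying Theorem~\ref{productrep} with $\epsilon_n=-1$ for every $n$ yields a unique sequence $(\alpha_n)$ such that $f(z)=\prod_{n=1}^{\infty}(1-z^n)^{\alpha_n}$ compactly on $|z|<R$, together with the identity (\ref{coeff}):
\begin{equation*}
-g_n \;=\; \sum_{d\mid n} d\,\alpha_d\,(-(-1))^{n/d-1} \;=\; \sum_{d\mid n} d\,\alpha_d.
\end{equation*}
Substituting $g_n=-q_n$ and inverting by M\"obius gives precisely
\begin{equation*}
n\alpha_n \;=\; \sum_{d\mid n} q_d\,\mu(n/d),
\end{equation*}
which is formula (\ref{integral1}).

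\medskip

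Finally, since every $b_n$ is an integer, Corollary~\ref{integral} (applied to the relation (\ref{formulaa}) with $\epsilon_k=-1$) forces every $\alpha_n$ to be an integer. Dividing the displayed identity by nothing and reading modulo $n$ gives $\sum_{d\mid n} q_d\,\mu(n/d)=n\alpha_n\equiv 0\pmod n$, as claimed. The only point requiring real care is the choice of $R\le 1$ so that the machinery of Theorem~\ref{productrep} is actually applicable; once that is in place, the result is immediate from the two ingredients that have already been proved, namely the explicit inversion formula for the exponents and their integrality criterion.
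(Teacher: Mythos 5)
Your proof is correct and follows essentially the same route as the paper: pass to the reciprocal polynomial $f(z)=\prod_j(1-x_j z)$, observe $g_n=-q_n$, apply formula (\ref{coeff}) with $\epsilon_n=-1$ and M\"obius inversion to identify $n\alpha_n=\sum_{d\mid n}q_d\mu(n/d)$, and invoke Corollary~\ref{integral} to conclude integrality of the $\alpha_n$. The only difference is that you explicitly spell out a valid choice of $R\le 1$ making $f$ zero-free on $|z|<R$, a point the paper leaves tacit.
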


Applying Theorem~\ref{arnold} when $q(x)$ is the characteristic  polynomial of an integral $k\times k$ matrix $A$
and $n=p^m$ is a power of a prime $p$, we get the following result.

\begin{theorem}\label{arnold1}
Let $A$ be an integral $k\times k$ matrix and $n=p^m$ be a power of a prime $p$. Then
$\tr A^{p^m}\equiv \tr A^{p^{m-1}}\mo{p^m}$.
\end{theorem}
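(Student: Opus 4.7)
The plan is to deduce Theorem~\ref{arnold1} immediately from Theorem~\ref{arnold} by specializing the monic integral polynomial to the characteristic polynomial of $A$ and the index $n$ to a prime power. All the real content is already packaged into Theorem~\ref{arnold}, so the work here is essentially a bookkeeping step with the M\"obius function on the chain of divisors of $p^m$.

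First I would let $q(x)=\det(xI-A)$ be the characteristic polynomial of $A$. Since $A$ has integer entries, $q(x)$ is a monic polynomial of degree $k$ with integer coefficients, so it falls under the hypothesis of Theorem~\ref{arnold}. If $x_1,\ldots,x_k$ denote the roots of $q$ in $\mathbb{C}$ (the eigenvalues of $A$, counted with multiplicity), then for every positive integer $n$ the power sum $q_n=x_1^n+\cdots+x_k^n$ equals $\tr A^n$; this is the standard identity obtained, for instance, by conjugating $A$ to an upper-triangular matrix over $\mathbb{C}$ and reading off the diagonal.

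Next I would specialize Theorem~\ref{arnold} to $n=p^m$. The divisors of $p^m$ are $1,p,p^2,\ldots,p^m$, and $\mu(p^j)$ vanishes for $j\ge 2$, so the Dirichlet sum $\sum_{d\mid p^m}q_d\,\mu(p^m/d)$ collapses to its two nonzero terms, namely those with $d=p^m$ and $d=p^{m-1}$. Writing these out gives
\begin{equation*}
\sum_{d\mid p^m}q_d\,\mu(p^m/d)\;=\;q_{p^m}\mu(1)+q_{p^{m-1}}\mu(p)\;=\;q_{p^m}-q_{p^{m-1}}.
\end{equation*}
Substituting $q_{p^m}=\tr A^{p^m}$ and $q_{p^{m-1}}=\tr A^{p^{m-1}}$ and invoking the divisibility statement of Theorem~\ref{arnold} then yields $\tr A^{p^m}-\tr A^{p^{m-1}}\equiv 0\pmod{p^m}$, which is exactly the conclusion.

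There is no real obstacle here; the only tiny point to double-check is the identification $q_n=\tr A^n$, but this is classical and requires no argument beyond diagonalizing (or triangularizing) over $\mathbb{C}$. The substance of the theorem has already been absorbed into the proof of Theorem~\ref{arnold}, which in turn rests on the integrality of the exponents $\alpha_n$ supplied by Corollary~\ref{integral}.
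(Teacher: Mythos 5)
Your proof is correct and follows exactly the same route as the paper: apply Theorem~\ref{arnold} to the characteristic polynomial of $A$ with $n=p^m$, identify $q_n=\tr A^n$, and observe that the M\"obius sum over divisors of $p^m$ collapses to $q_{p^m}-q_{p^{m-1}}$. The paper states this deduction without spelling out the details, whereas you write them out, but the content is identical.
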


Theorem~\ref{arnold1} has been conjectured by
Arnold (\cite{a1},\cite{a2}), and it has been the subject of several recent publications (\cite{vin},
\cite{zar2006}, \cite{zar2008}, \cite{mmbp}), even though it can be
found in papers going back to the 1920s (\cite{ja}, \cite{schur}). In \cite{mmbp} we
proved a more general result using a different method. However, the methods developed in the present paper
lead naturally to a discovery of Arnold's conjecture and the resulting proof is short and aesthetically
pleasing. We should mention that
already the paper of Moree \cite{moree} contains a similar proof of (\ref{integral1}), though
the above arithmetic consequences of this equality have not been addressed there.

\section{Numerical Method}\label{4}
In this section we describe the numerical method mentioned in the introduction.
We denote by $p_n$ the $n$th prime number.
Let $f$ be a function analytic and non-zero in the closed disk $|z|\leq R\leq 1$, $f(0)=1$, $f'(0)=0$.
Let $m$ be such that $Rp_m>1$.
Our goal is to approximate the product $\prod_{k=m}^{\infty}f(1/p_k)$ to high accuracy, as many constants
in number theory appear in such a form. Our strategy here is very similar to the one developed
by Moree \cite{moree} in the special case when $f$ is a rational function satisfying some additional properties.

By Theorem~\ref{productrep}, there is a  product decomposition
\[f(z)=\prod_{n=2}^\infty (1-z^n)^{\alpha_n}.\]
Thus
\[\prod_{k=m}^{\infty}f(1/p_k)=\prod_{k=m}^{\infty}\prod_{n=2}^\infty (1-p_k^{-n})^{\alpha_n}=\prod_{n=2}^\infty
\prod_{k=m}^{\infty}
(1-p_k^{-n})^{\alpha_n}=\prod_{n=2}^\infty\zeta_m(n)^{-\alpha_n}\]
where $\zeta_m(s)=\prod_{k=m}^{\infty}(1-p_k^{-s})^{-1}=\zeta(s)\prod_{k=1}^{m-1}(1-p_k^{-s})$
is a partial zeta-function (note that the
change in the order of multiplication is allowed as the product is absolutely convergent).
The key observation behind our method is that the product $\displaystyle \prod_{n=2}^\infty\zeta_m(n)^{-\alpha_n}$
converges rapidly.
More precisely, we have the following result.

\begin{theorem}\label{estimate}
Let $f(z)=\prod_{n=2}^\infty (1-z^n)^{\alpha_n}$ compactly converge in the disk $|z|<R\leq 1$.
Let $B$ be an upper bound for $|f'(z)/f(z)|$ on $|z|=R$.
Let $m$ be such that $Rp_m>1$ and let $M>m$ be such that
$\displaystyle C(R,B,m,M):= \frac{(e-1)Bp_m}{Rp_m-1}\frac{1}{(Rp_m)^M}\leq 1$. Then
\begin{equation}
\left|\prod_{k=m}^{\infty}f(1/p_k)-\prod_{n=2}^M\zeta_m(n)^{-\alpha_n}\right|\leq
C(R,B,m,M)\left|\prod_{n=2}^M\zeta_m(n)^{-\alpha_n}\right|.
\end{equation}
\end{theorem}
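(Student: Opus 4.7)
The plan is to decompose the full product, isolate the tail factor, and bound it multiplicatively.

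First I would rewrite the product as a truncation times a tail. By absolute convergence (as noted just before the statement) the rearrangement
\[\prod_{k=m}^{\infty}f(1/p_k)=\prod_{n=2}^{M}\zeta_m(n)^{-\alpha_n}\cdot E,\qquad E:=\prod_{n=M+1}^{\infty}\zeta_m(n)^{-\alpha_n},\]
is legitimate, so the quantity whose modulus we want to estimate equals $\left|\prod_{n=2}^{M}\zeta_m(n)^{-\alpha_n}\right|\cdot|E-1|$. The theorem thus reduces to proving $|E-1|\le C(R,B,m,M)$.

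Second, I would linearize the exponential. Setting $L:=\log E=-\sum_{n>M}\alpha_n\log\zeta_m(n)$, the integral representation $e^L-1=L\int_0^1 e^{tL}dt$ gives the standard bound $|e^L-1|\le(e-1)|L|$ whenever $|L|\le 1$. This reduces the task to proving the clean estimate
\[|L|\le\frac{Bp_m}{(Rp_m-1)(Rp_m)^M}=Bp_m\sum_{n>M}(Rp_m)^{-n}.\]
The hypothesis $C\le 1$ forces this bound to be at most $1/(e-1)<1$, retroactively justifying the linearization.

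Third, I would bound $|L|$ factorwise in $n$. Since $g(z)=f'(z)/f(z)$ is analytic on $|z|\le R$ with $|g(z)|\le B$ on $|z|=R$, Cauchy's inequality yields $|g_n|\le B/R^{n-1}$; combined with M\"obius inversion of formula (\ref{coeff}) (which for $\epsilon_n=-1$ reads $-g_n=\sum_{d\mid n}d\alpha_d$), this gives $n|\alpha_n|\le\sum_{d\mid n}|g_d|\le d(n)BR^{1-n}\le nBR^{1-n}$, where $d(n)$ is the divisor function. On the other hand, from $\log\zeta_m(n)=\sum_{k\ge m}\sum_{j\ge 1}p_k^{-nj}/j$ together with $-\log(1-x)\le x/(1-x)$ and the integral comparison $\sum_{k\ge m}p_k^{-n}\le p_m^{-n}+\int_{p_m}^{\infty}x^{-n}dx$, one obtains $|\log\zeta_m(n)|\le c\,p_m^{1-n}$ for a small explicit constant. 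Multiplying the two estimates and summing the resulting geometric series over $n>M$ produces a bound of the shape $|L|\le c''\,Bp_m/\bigl((Rp_m)^M(Rp_m-1)\bigr)$.

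The main obstacle is numerical: the crude combination above produces a constant $c''$ somewhat larger than $1$, whereas getting the advertised coefficient $(e-1)$ in front of $C$ requires $c''\le 1$. Tightening to this level needs the specific structural facts that $R\le 1$, that $1/(1-p_k^{-n})\le 4/3$ for $n\ge 2$ and $p_k\ge 2$, and that $d(n)/n$ is usually well below $1$; these slacks must be managed simultaneously. Once the constants are in line, the remainder of the proof is a direct chain of inequalities.
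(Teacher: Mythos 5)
Your overall structure matches the paper's proof: decompose $\prod f(1/p_k)$ into the truncation $\prod_{n\le M}\zeta_m(n)^{-\alpha_n}$ times the tail $E$, reduce to bounding $|E-1|$, linearize via $|e^L-1|\le(e-1)|L|$ for $|L|\le1$, and estimate $|L|$ factorwise using Cauchy and M\"obius for $|\alpha_n|$ and an analytic bound for $\log\zeta_m(n)$. The gap you flag at the end, however, is real as you have set it up: bounding $\log\zeta_m(n)$ by expanding $-\log(1-p_k^{-n})\le p_k^{-n}/(1-p_k^{-n})\le\frac43 p_k^{-n}$ introduces an irremovable factor $4/3$. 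Even with your tighter Cauchy bound $|g_n|\le B/R^{n-1}$ (the paper uses the weaker $B/R^n$), the chain gives $|L|\le\frac{4R}{3}\cdot\frac{Bp_m}{(Rp_m-1)(Rp_m)^M}$, which beats the required $\frac{Bp_m}{(Rp_m-1)(Rp_m)^M}$ only when $R\le 3/4$. Your suggestion that this can be repaired by exploiting $d(n)/n<1$ does not help, since $d(n)/n=1$ when $n$ is a small prime power and that slack is already spent in passing from $d(n)$ to $n$.

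The missing idea is to avoid the Euler-product logarithm entirely. The paper instead applies $\ln(1+x)\le x$ to $x=\zeta_m(n)-1\ge0$ to get $\ln\zeta_m(n)\le\zeta_m(n)-1$, and then observes that $\zeta_m(n)-1\le\sum_{j\ge p_m}j^{-n}\le p_m^{-n}+\frac{p_m^{1-n}}{n-1}=p_m^{1-n}\bigl(\tfrac1{p_m}+\tfrac1{n-1}\bigr)\le p_m^{1-n}$ once $n\ge3$ and $p_m\ge2$. This gives constant exactly $1$ rather than $4/3$, and substituting it into your sum yields $|L|\le\frac{Bp_m}{(Rp_m-1)(Rp_m)^M}$ (in fact slightly better with your Cauchy exponent), after which the $(e-1)$-linearization and the hypothesis $C\le1$ close the argument cleanly. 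So the route is right but the specific bound on $\log\zeta_m(n)$ must be replaced; with the paper's bound the constants line up with no slack-juggling.
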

\begin{proof}
Let $f'/f=\sum_{n=1}^{\infty} g_n z^{n-1}$ and.
By Cauchy's inequality, we have $|g_n|\leq B/R^n$ for all $n\in \mathbb N$. By formula (\ref{coeff}) and
the M\"obius inversion formula, we have $n\alpha_n=-\sum_{d|n}g_d\mu(n/d)$. It follows that
\begin{equation}
|\alpha_n|\leq \frac{B}{R^n}
\end{equation}
In addition,
\[\zeta_m(n)-1\leq \sum_{k=p_m}^{\infty}\frac{1}{k^n}\leq p_m^{1-n}.
\]
for all $n\geq 3$.
Using these estimates and the inequality $\ln(1+x)\leq x$ (for $x>0$) we get
\[\left|\sum_{n=M+1}^{\infty}\alpha_n\ln\zeta_m(n)\right|\leq \sum_{n=M+1}^{\infty}\frac{B}{R^n}\ln\zeta_m(n)\leq\]
\[\leq B\sum_{n=M+1}^{\infty}\frac{p_m}{(Rp_m)^n}=\frac{Bp_m}{Rp_m-1}\frac{1}{(Rp_m)^M}.
\]
Note now that for $|z|\leq 1$ we have $|1-e^z|\leq (e-1)|z|$. It follows that
\[\left|1-\prod_{n=M+1}^\infty\zeta_m(n)^{-\alpha_n}\right|=\left|1-\exp\left(\sum_{n=M+1}^{\infty}-\alpha_n\ln\zeta_m(n) \right)
\right|\leq\]
\[\leq \frac{(e-1)Bp_m}{Rp_m-1}\frac{1}{(Rp_m)^M}= C(R,B,m,M)
\]
provided $M$ is such that $C(R,B,m,M)\leq 1$.
Since $|\prod_{k=m}^{\infty}f(1/p_k)|=|\prod_{n=2}^\infty\zeta_m(n)^{-\alpha_n}|$,
the theorem follows now by
multiplying the last inequality by
$|\prod_{n=2}^M\zeta_m(n)^{-\alpha_n}|$.
\end{proof}

The following two examples illustrate our method.

\begin{example}\label{e1}
{\rm  We will compute the first fifty decimal digits of the following constant
$A_1$ from the paper of Ramanujan \cite{raman}:

\[A_1=\lim_{n\to \infty}\frac{\sqrt{\ln n}}{n}\sum_{k=1}^{n}\frac{1}{d(k)}=\frac{1}{\sqrt{\pi}}\prod_{k=1}^{\infty}
\sqrt{p_k^2-p_k}\ln\left(\frac{p_k}{p_k-1}\right),\]
where $d(k)$ is the number of divisors of $k$.

Let $\displaystyle
f(z)=\frac{-\ln(1-z)}{z}\sqrt{1-z}$. Then $\sqrt{\pi}A_1=\prod_{k=1}^{\infty}f(1/p_k)$.
The function $f$ is holomorphic and non-zero in the unit disk. A straightforward computation yields
\[ \frac{f'(z)}{f(z)}= \frac{-1}{(1-z)\ln(1-z)}-\frac{1}{z}-\frac{1}{2(1-z)}.\]
Using the inequality $|(1-z)\ln(1-z)|\geq |z|-|z|^2$ we get
\[\left|\frac{f'(z)}{f(z)}\right|\leq \frac{1}{|z|-|z|^2}+\frac{1}{|z|}+\frac{1}{2(1-|z|)}=\frac{2}{|z|}+
\frac{3}{2(1-|z|)}.\]
This gives an estimate $|f'/f|\leq 18$ for $|z|=0.9$. Thus we may take $R=0.9$, $B=18$, and $m=7$,
so $p_m=17$ and $Rp_m>15$. Take $M=48$. Then
\[ C(R,B,m,M)
\leq \frac{38}{15^{48}}\leq 10^{-54}.\]
Thus, by Theorem~\ref{estimate}
\[ \left|A_1-\frac{1}{\sqrt{\pi}}\prod_{k=1}^{6}f(1/p_k)\prod_{n=2}^{48}\zeta_7(n)^{-\alpha_n}\right|\leq
10^{-54}\left|\frac{1}{\sqrt{\pi}}\prod_{k=1}^{6}f(1/p_k)\prod_{n=2}^{48}\zeta_7(n)^{-\alpha_n}\right|.
 \]
Now we calculate the exponents $\alpha_n$, $n=2,3,\ldots, 48$ and the product
$\prod_{k=1}^{6}f(1/p_k)\prod_{n=2}^{48}\zeta_7(n)^{-\alpha_n}$ accurate to 54 decimal digits
and get \\
$A_1=0.54685595528047446684551710099076178991021048592974\ldots$ (the computation has been done with 211
accurate digits using PARI/GP).

}
\end{example}

\begin{example}\label{e2}
{\rm Let $d(k)$ and $\sigma(k)$ denote the number and the sum of divisors of $k$, respectively.
Then $\sigma(k)/d(k)$ is the average divisor of $k$. The average of the average divisor
is then the quantity $\displaystyle \frac{1}{n}\sum_{k=1}^{n}\sigma(k)/d(k)$.  In \cite[Thm. 4.1]{erdos}
it is proved that this quantity is asymptotically equal to $cn/\ln n$, where
\[ c=\frac{1}{\sqrt{\pi}}\prod_{k=1}^{\infty}
\frac{p_k^{3/2}}{\sqrt{p_k-1}}\ln\left(1+\frac{1}{p_k}\right)=\frac{1}{\sqrt{\pi}}\prod_{k=1}^{\infty}f(1/p_k),
\]
where $\displaystyle f(z)=\frac{\ln(1+ z)}{z\sqrt{1-z}}$. V. I. Arnold, in his recent book \cite{a3},
attributes this asymptotic to A. A. Karatsuba
(note however that the formula for $c_1=c$ in the footnote on page 78 of \cite{a3} is incorrect: it has a factor of $1/\pi$ instead
of $1/\sqrt{\pi}$). According to Arnold, M. Korolev computed $c\approx 0.7138067\ldots$. We will see that
only the first 5 digits are accurate.
The function $f$ is holomorphic and non-zero in the unit disk and
\[ \frac{f'(z)}{f(z)}= \frac{1}{(1+z)\ln(1+z)}-\frac{1}{z}+\frac{1}{2(1-z)}.\]
The same estimates as in Example~\ref{e1} allow us to take $R=0.9$, $B=18$, $m=7$, and $M=48$ and get
\[ \left|c_1-\frac{1}{\sqrt{\pi}}\prod_{k=1}^{6}f(1/p_k)\prod_{n=2}^{48}\zeta_7(n)^{-\alpha_n}\right|\leq
10^{-54}\left|\frac{1}{\sqrt{\pi}}\prod_{k=1}^{6}f(1/p_k)\prod_{n=2}^{48}\zeta_7(n)^{-\alpha_n}\right|.\]
Now we calculate the exponents $\alpha_n$, $n=2,3,\ldots, 48$ and the product
$\prod_{k=1}^{6}f(1/p_k)\prod_{n=2}^{48}\zeta_7(n)^{-\alpha_n}$ accurate to 54 digits
and get \\
$c=0.71380993049991415224401060402799291827213336525147\ldots$.
}
\end{example}

\end{document}